\theoremstyle{plain}
\numberwithin{equation}{section}
\newtheorem{definition}{Definition}[section]
\newtheorem{theorem}[definition]{Theorem}
\newtheorem*{theorem*}{Main Theorem}\newtheorem*{th*}{Theorem}\newtheorem*{definition*}{Definition}\newtheorem*{example*}{Example}
\newtheorem*{proposition*}{Proposition}
\newtheorem{remark}[definition]{Remark}
\newtheorem*{remark*}{Remark}
\newtheorem*{sideremark*}{Side Remark}
\newtheorem*{claim*}{Claim}
\newtheorem*{q*}{Question}
\newtheorem{lemma}[definition]{Lemma}
\newtheorem{corollary}[definition]{Corollary}
\newtheorem*{corollary*}{Corollary}\newtheorem*{cq*}{Conjecture/Question}
\newtheorem{proposition}[definition]{Proposition}
\newcommand{\R}{\mathbb{R}}
\newcommand{\emb}{\hookrightarrow}
\newcommand{\e}{\varepsilon}
\newcommand{\dd}{{\rm d}}
\newcommand{\M}{{\mathcal{M}}}
\newcommand{\CC}{{\mathscr{C}}}
\newcommand{\sph}{{\mathbf{S}}}
\newcommand{\length}{{\rm Length}}
\newcommand{\GG}{{\mathcal{G}}}
\def\XXint#1#2#3{{\setbox0=\hbox{$#1{#2#3}{\int}$ }
\vcenter{\hbox{$#2#3$ }}\kern-.6\wd0}}
\title{Gaussian kernels on non-simply-connected closed Riemannian manifolds are never positive definite}
\author{Siran Li}
\address{Siran Li: School of Mathematical Sciences $\&$ CMA-Shanghai, Shanghai Jiao Tong University, No.~6 Science Buildings,
800 Dongchuan Road, Minhang District, Shanghai, China (200240)}
\email{\texttt{siran.li@sjtu.edu.cn}}
\keywords{Gaussian kernel; kernel method; isometric embedding; closed geodesic; RKHS (reproducing kernel Hilbert space); non-simply-connected manifold.}
\subjclass[2020]{46E22, 46N30, 68R12, 62B11}
\date{\today}
\begin{document}

\begin{abstract}
We show that the Gaussian kernel $\exp\left\{-\lambda d_g^2(\bullet, \bullet)\right\}$ on any non-simply-connected closed Riemannian manifold $(\M,g)$, where $d_g$ is the geodesic distance, is not positive definite for any $\lambda > 0$, combining  analyses in the recent preprint~\cite{cmo} by Da Costa--Mostajeran--Ortega and classical comparison theorems in  Riemannian geometry.  
\end{abstract}
\maketitle

\section{Main Result}\label{sec: main result}
This contribution is concerned with mathematical properties of the standard Gaussian kernel defined via the  geodesic distance on Riemannian manifolds. Our main result is as follows:

\begin{theorem*}
Let $\M$ be any compact, non-simply-connected $\CC^\infty$-manifold without boundary. For any Riemannian metric $g$ on $\M$ and any parameter $\lambda >0$, the Gaussian kernel $e^{-\lambda d_g^2(\bullet,\bullet)}: \M \times \M \to [0,\infty[$ cannot be  positive definite. Here $d_g$ is the geodesic distance on $(\M,g)$. 
\end{theorem*}

The proof relies crucially on the recent nice work~\cite{cmo} by Da Costa--Mostajeran--Ortega, which proves the above theorem for $\M$ = the round circle $\varrho\sph^1$ for any $\varrho >0$, and hence for spheres, real projective spaces, and real Grassmannians, etc. The arguments in \cite{cmo} combined with classical calculus of variations techniques on manifolds will enable us to conclude the theorem.

\section{Background and preliminaries}\label{sec: intro}

\subsection{Manifold learning meets kernel methods}
In recent developments of machine learning, the field of manifold learning, which focuses on non-Euclidean geometric features of  data, has attracted considerable attention. See \cite{bns, cal, ss} among many other references. We are primarily interested in its connexion with the kernel methods, which also play a prominent role in machine learning and its  mathematical theories. In \cite{cs} Cucker--Smale surveyed in detail the mathematical  foundations of kernel methods. Applications to various problems including support vector machines, kernel  principal component analysis, controlled stochastic differential equations, and reservoir computing, etc., can be found in \cite{scfly, ss, five, ssm, go} and the many references cited therein.

Consider now a more general setting: $(X,d)$ = metric space. For kernel methods applied to metric spaces, the essential mathematical structure is a symmetric function $k: X \times X \to \R$, known as a \emph{kernel}, which depends on the metric distance between observations. A widely used family of kernels on $(X,d)$ is 
\begin{align*}
k(x,y) := e^{-\lambda \left[d(x,y)\right]^q}\qquad \text{for } \lambda, q >0.
\end{align*}
See Chapelle--Haffner--Vapnik \cite{chv} and Feragen--Lauze--Hauberg
\cite{flh}. 

When $q=2$, such $k$ is known as the \emph{Gaussian kernel} on $(X,d)$.

Any complete Riemannian manifold $(\M,g)$ has a metric space structure,  with metric space topology compatible with the manifold topology.  The metric ---  \emph{i.e.}, the distance function --- on $\M$ with respect to the Riemannian metric $g$ is given by $$d_g(x,y):= \inf \left\{\gamma: [0,L] \to \M:\,\text{$\gamma$ is a piecewise $\CC^1$-curve with $\gamma(0)=x$ and $\gamma(1)=y$}\right\}.$$ It makes no difference if one replaces here ``piecewise $\CC^1$''  by ``piecewise $\CC^\infty$'', ``$\CC^\infty$'', or ``$\CC^1$''. Completeness of the metric space $(\M,d_g)$ is equivalent to geodesic completeness  of the Riemannian manifold $(\M,g)$. See, \emph{e.g.},  \cite[Hopf--Rinow Theorem, \S 7]{dc}.

\subsection{Positive definite kernels}
For applications in machine learning (\cite{scfly, ss, ssm, go, flh, cs, five}), one usually requires that the kernel function maps data into an RKHS (reproducing kernel Hilbert space), which in turn entails \emph{positive definiteness} of the kernel. See Sch\"{o}lkopf--Smola \cite{ss}.
\begin{definition}\label{def: pos def}
$k: (X,d) \times (X,d) \to \R$ is positive definite  if for any $N \in \mathbb{N}$ and $\{x_1, \ldots, x_N\} \subset X$, the Gram  matrix $\GG= \left\{\GG_{ij}\right\} \in {\rm Mat}(N \times N;\R)$ with $\GG_{ij} := k(x_i,x_j)$ is positive semi-definite. 
\end{definition}

This notion of positive definiteness of kernel functions was proposed by Mathias and Bochner.  See \cite{boc, s}.

Positive definite kernels are also of considerable theoretical interest in functional analysis. A classical theorem of Schoenberg (\cite{s}) back in 1938 states that a separable metric space $(X,d)$ is isometrically embeddable into the Hilbert space $L^2(]0,1[)$ if and only if the Gaussian kernel $e^{-\lambda d^2(\bullet,\bullet)}$ is positive definite for each $\lambda>0$. 
One thus finds that $L^2(]0,1[)$ contains  copies of $\R^n$, the space ${\bf Sym}^{++}_n$ of symmetric positive  definite $n \times n$ matrices with the Frobenius norm/log-Euclidean metric, and the real Grassmannians ${\bf Gr}(k,n)$ with the projective metric, etc. 

\begin{definition}\label{def: text}
Let $(X,d)$ and $(X',d')$ be metric spaces. A map $\iota: X \to X'$ is said to be an isometric embedding if $\iota$ is one-to-one and $d'\big(\iota(x), \iota(y)\big)=d(x,y)$ for all $x, y \in X$.

For complete Riemannian manifolds $(\M,g)$ and $(\M',g')$, a map $\iota: \M \to \M'$ is said to be a metric space isometric embedding if $\iota$ is an embedding in the category of $\CC^\infty$-manifolds, and $\iota: \left(\M,d_g\right) \to \left(\M,d_{g'}\right)$ is an isometric embedding of metric spaces. 
\end{definition}

\begin{remark}\label{remark}
For Riemannian manifolds, an isometric embedding in the sense of metric spaces is very different from that in the sense of differential geometry. See Appendix~\ref{sec: appendix}. 
\end{remark}

On the other hand, various results on the negative side are reported in the literature. Denote the range of positive definiteness of the Gaussian kernel on $(X,d)$ as \begin{align*}
\Lambda_+(X):=\left\{\lambda \in ]0,\infty[:\, \text{the Gaussian kernel } e^{-\lambda d^2(\bullet,\bullet)} \text{ is positive definite}\right\}.
\end{align*}
Feragen--Lauze--Hauberg \cite{flh} proved that for any Riemannian manifold $(\M,g)$, the range $\Lambda_+(\M,d_g)$ cannot be the whole $]0,\infty[$ unless $g$ is Euclidean\footnote{Indeed, \cite[Theorem~1]{flh} proves a more general result: the only geodesic metric spaces with positive definite Gaussian kernel are in the class ${\bf CAT}(0)$.}. Thus, by Schoenberg's theorem \cite{s}, only flat Riemannian manifolds can be isometrically embedded \emph{in the metric space sense} into $L^2(]0,1[)$. 

This, however, leaves open the important question:
\emph{which Riemannian manifolds have a large $\Lambda_+ \subset ]0,\infty[$}? By explicit computations for the Gaussian kernel associated to equidistributed points on $\sph^1$, in the recent preprint~\cite{cmo}, Da Costa--Mostajeran--Ortega proved the following
\begin{theorem}\label{thm: Sn etc}
The range of positive definiteness of the Gaussian kernel on the following Riemannian manifolds are empty (namely that $\Lambda_+= \emptyset$), all equipped with canonical metrics:
\begin{itemize}
\item
$\sph^n$ with $n \geq 1$;
\item
the real project spaces $\R\mathbf{P}^n$ with $n>1$;
\item
the real Grassmannians ${\bf Gr}(k,n)$ with $1 \leq k <n$;
\item
the hyperbolic hyperboloid; and 
\item 
the torus $\mathbf{T}(a,b) = \left\{\begin{bmatrix}
(a+b\cos \theta)\cos \phi\\
(a+b\cos \theta)\sin \phi\\
b\sin\theta
\end{bmatrix}\in\R^3:\,0 \leq \theta,\phi < 2\pi\right\}$ for any $0<a<b$.
\end{itemize}
\end{theorem}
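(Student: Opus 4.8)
The plan is to reduce every item to the circle case $\Lambda_+(\varrho\sph^1)=\emptyset$ already furnished by \cite{cmo}, through a monotonicity principle for the range of positive definiteness. The key point is soft: \emph{if $\iota\colon(X,d)\to(X',d')$ is an isometric embedding of metric spaces, then $\Lambda_+(X',d')\subseteq\Lambda_+(X,d)$}. Indeed, for $\lambda\in\Lambda_+(X')$ and any finite subset $\{x_1,\dots,x_N\}\subset X$, the Gram matrix $\bigl[e^{-\lambda d(x_i,x_j)^2}\bigr]_{i,j}$ is literally equal to $\bigl[e^{-\lambda d'(\iota x_i,\iota x_j)^2}\bigr]_{i,j}$, which is positive semi-definite; hence $\lambda\in\Lambda_+(X)$. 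Consequently, to prove $\Lambda_+(\M)=\emptyset$ for a given $\M$ it suffices to exhibit a subset of $(\M,d_g)$ that, carrying the induced metric, is isometric to some round circle $\varrho\sph^1$; concretely, a simple closed geodesic $\gamma$ of length $L$ each of whose sub-arcs of length $\le L/2$ is minimizing in $\M$. A general closed geodesic need not have this last property, so it has to be verified case by case --- this is the only genuine point in the first four items.

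For $\sph^n$ ($n\ge1$), take a great circle $C=\sph^n\cap\Pi$ with $\Pi$ a $2$-plane through the origin: for $p,q\in C$ one has $d_{\sph^n}(p,q)=\arccos\langle p,q\rangle$, which is exactly the shorter-arc length along $C$, so $C$ (a round circle of radius $1$) is isometrically embedded. For $\R\mathbf{P}^n$ ($n>1$), push a great circle forward under the covering $\sph^n\to\R\mathbf{P}^n$ and use $d_{\R\mathbf{P}^n}([p],[q])=\min\{d_{\sph^n}(p,q),\,d_{\sph^n}(p,-q)\}$ to verify that the image is isometric to a round circle. For ${\bf Gr}(k,n)$ ($1\le k<n$), fix a $(k-1)$-dimensional subspace $W$ and a $2$-plane $P\perp W$ and set $\gamma(t)=W\oplus\ell(t)$, where $\ell(t)\subset P$ is the line making angle $t$ with a fixed reference line; this $\gamma$ is a simple closed geodesic, and the principal-angles formula for the Grassmannian distance, $d(V,V')=\bigl(\sum_i\theta_i^2\bigr)^{1/2}$ with $\theta_i\in[0,\pi/2]$ the principal angles, yields $d(\gamma(t_1),\gamma(t_2))=\min\{|t_1-t_2|,\,\pi-|t_1-t_2|\}$ up to the normalization of the metric, so $\gamma$ is isometrically embedded. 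For the torus $\mathbf{T}(a,b)$, whose induced metric reads $b^2\,d\theta^2+(a+b\cos\theta)^2\,d\phi^2$ in the coordinates $(\theta,\phi)$, take a meridian $\{\phi=\phi_0\}$: any piecewise-$\CC^1$ curve joining two of its points has length at least $b\int|\dot\theta|\,dt\ge b\cdot d_{\R/2\pi\mathbb{Z}}(\theta_1,\theta_2)$, which is precisely the meridian arc length between them, so the meridian --- a round circle of radius $b$ --- is isometrically embedded. In each of these four cases the monotonicity principle, together with $\Lambda_+(\varrho\sph^1)=\emptyset$, forces $\Lambda_+(\M)=\emptyset$.

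The hyperbolic hyperboloid is the genuinely different case, and I expect it to be the main obstacle: being simply connected and negatively curved, it carries no closed geodesic at all, so no round circle embeds isometrically into it and the above scheme collapses. For $\mathbb{H}^n$ with $n\ge2$ one should first reduce the dimension by the monotonicity principle, via the totally geodesic isometric embedding $\mathbb{H}^2\hookrightarrow\mathbb{H}^n$ --- this is legitimate because between any two points of $\mathbb{H}^n$ there is a unique, globally minimizing geodesic, which for two points of the totally geodesic $\mathbb{H}^2$ lies inside $\mathbb{H}^2$, so intrinsic and ambient distances agree there. It then remains to handle $\mathbb{H}^2$ directly, in the manner of \cite{cmo}: place $N$ points equidistributed on a geodesic circle of radius $R$ in $\mathbb{H}^2$, note that the resulting Gram matrix is circulant and hence diagonalized by the discrete Fourier transform, and show that for every $\lambda>0$ one can choose $N$ and $R$ so that some eigenvalue --- some discrete Fourier coefficient of $k\mapsto e^{-\lambda d(p_0,p_k)^2}$ --- is strictly negative. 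Already a four-point configuration (an equilateral geodesic triangle of side $s$ with its circumcenter) shows, via the hyperbolic law of cosines, that the squared distance is not conditionally negative definite on $\mathbb{H}^2$, whence $\Lambda_+(\mathbb{H}^2)\subsetneq\,]0,\infty[$ by Schoenberg's theorem; sharpening this to emptiness for \emph{all} $\lambda>0$ is exactly the delicate harmonic-analytic estimate that \cite{cmo} supplies. Everything else in the theorem is then the soft monotonicity argument plus the elementary circle constructions above.
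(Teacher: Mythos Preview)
Your proposal is correct and follows essentially the approach the paper attributes to \cite{cmo}. Note that the paper does not give its own proof of this theorem; it quotes the result from \cite{cmo} and, in the appendix, records that \cite{cmo} proves the first three items precisely by exhibiting metric-space isometric embeddings of round circles into $\sph^n$, $\R\mathbf{P}^n$, and ${\bf Gr}(k,n)$ --- exactly your monotonicity-plus-circle scheme --- while the hyperbolic hyperboloid is handled by a separate direct computation in \cite{cmo}, as you correctly anticipate.
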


In effect, our main theorem in \S\ref{sec: main result} adds a vast class to the list of Riemannian manifolds  whose Gaussian kernel is not positive definite for any $\lambda>0$. A manifold is said to be \emph{closed} if it is compact and boundaryless. Combining our result with Theorem~\ref{thm: Sn etc} above, we deduce 
\begin{corollary}\label{cor}
$\Lambda_+(\M,g)=\emptyset$ for the following {\bf closed} Riemannian manifolds:
\begin{itemize}
\item
Any non-simply-connected $(\M,g)$;
\item
$\sph^n$ with $n \geq 2$;
\item
the real Grassmannians ${\bf Gr}(k,n)$ with $1 \leq k <n$.
\end{itemize} 
\end{corollary}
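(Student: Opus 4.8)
Items (ii) and (iii) of Corollary~\ref{cor} are already subsumed by Theorem~\ref{thm: Sn etc} (the spheres $\sph^n$ with $n\geq 2$ and all real Grassmannians appear on that list), so the only point that needs an argument is item (i) — equivalently, the Main Theorem of \S\ref{sec: main result}: $\Lambda_+(\M,g)=\emptyset$ for every closed non-simply-connected $(\M,g)$. My plan for it is to reduce to the round-circle case $\varrho\sph^1$, which is proved for every radius $\varrho>0$ in \cite{cmo} (cf.\ Theorem~\ref{thm: Sn etc}). Since positive definiteness of a kernel is a property of finite point configurations, it is inherited by every subset; and since it depends only on the distance function, it suffices to exhibit inside $(\M,g)$ a subset that, with the induced metric $d_g$, is isometric \emph{as a metric space} to some $\varrho\sph^1$: the Gaussian kernel on that circle is not positive definite for any $\lambda>0$, hence neither is the one on $\M$.

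The subset I would use is the trace of a shortest noncontractible closed geodesic. Because $\M$ is compact and $\pi_1(\M)\neq e$, the homotopy systole
\[
\ell^\star:=\inf\bigl\{\length_g(c):\ c\text{ a piecewise-}\CC^1\text{ loop in }\M\text{ with }[c]\neq e\bigr\}
\]
is strictly positive (a loop shorter than twice the injectivity radius lies in a geodesic ball and is contractible) and is attained: an arc-length minimizing sequence of such loops subconverges uniformly, by compactness of $\M$ and lower semicontinuity of length, to a loop that is still noncontractible because $C^0$-close loops are freely homotopic; classical variational arguments then make a minimizer $\gamma$ a closed geodesic, which I parametrize by arc length over $\R/\ell^\star\mathbb{Z}$. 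The crux is the claim $d_g\bigl(\gamma(s),\gamma(s')\bigr)=d_{\R/\ell^\star\mathbb{Z}}(s,s')$ for all $s,s'$, i.e.\ the shorter of the two arcs of $\gamma$ joining the two points — of length $\delta:=d_{\R/\ell^\star\mathbb{Z}}(s,s')\leq\ell^\star/2$ — realizes the distance. To see it, suppose a path $\sigma$ joined $\gamma(s)$ to $\gamma(s')$ with $\length_g(\sigma)<\delta$, and let $a$ (length $\delta$) and $b$ (length $\ell^\star-\delta$) be the short and long arcs of $\gamma$ from $\gamma(s)$ to $\gamma(s')$. The loops $\beta:=\sigma*\bar a$ and $\beta':=\sigma*b$ have lengths $<2\delta\leq\ell^\star$ and $<\ell^\star$ respectively, and $[\beta']=[\sigma*\bar a*a*b]=[\beta]\,[\gamma]$ in $\pi_1(\M,\gamma(s))$; since $[\gamma]\neq e$, at least one of $[\beta],[\beta']$ is nontrivial, producing a noncontractible loop of length $<\ell^\star$ and contradicting the definition of $\ell^\star$. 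The same identity forces $\gamma$ to be injective, so $\bigl(\gamma(\R/\ell^\star\mathbb{Z}),d_g\bigr)$ is isometric to $\varrho\sph^1$ with $\varrho=\ell^\star/(2\pi)$, completing the reduction.

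Granting the claim, the theorem follows: were $e^{-\lambda d_g^2(\bullet,\bullet)}$ positive definite on $\M$ for some $\lambda>0$, its restriction to $\gamma(\R/\ell^\star\mathbb{Z})$ would be positive definite, hence by the isometry above $e^{-\lambda d^2}$ on $\varrho\sph^1$ with $\varrho=\ell^\star/(2\pi)$ would be positive definite, contradicting $\Lambda_+(\varrho\sph^1)=\emptyset$ from \cite{cmo}. The main obstacle is precisely that claim: one must rule out every shortcut through $\M$ beating a short arc of the systolic geodesic, and the delicate point is that a hypothetical shortcut yields \emph{two} competing loops $\beta,\beta'$, of which in general only one is simultaneously short and noncontractible — the relation $[\beta']=[\beta][\gamma]$ being exactly what guarantees that at least one of them is (this is why the argument needs the \emph{global} shortest noncontractible loop, not merely the shortest one in a fixed free homotopy class). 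The remaining inputs — existence and regularity of the systolic geodesic, stability of free homotopy classes under $C^0$-perturbation, and heredity of positive definiteness to subsets — are standard, and I would cite them rather than reprove them.
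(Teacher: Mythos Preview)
Your argument is correct. For items (ii) and (iii) it matches the paper exactly (they are simply quoted from Theorem~\ref{thm: Sn etc}); for item (i) you take a cleaner route than the paper does. The paper does not assert attainment of the systole: it selects an $\varepsilon$-almost minimizer $\gamma_\star$ with $\length[\gamma_\star]\le\kappa_0+\varepsilon$, proves only an approximate version of your isometry claim --- the $d_g$-distance in $\M$ between equidistributed points on $\gamma_\star$ differs from the arc distance by at most $\varepsilon$ --- and then invokes continuity of eigenvalues (Kato) to transfer the negative eigenvalue of the circulant matrix in Lemma~\ref{lemma: cmo} to the actual Gram matrix. You instead pass directly to the global systolic minimizer $\gamma$; your shortcut contradiction via $[\beta']=[\beta][\gamma]$ is precisely the $\varepsilon=0$ limit of the paper's proof of~($\spadesuit$), and it yields an \emph{exact} metric-space isometric embedding $\varrho\sph^1\hookrightarrow(\M,d_g)$, after which heredity of positive definiteness finishes the job with no perturbation step. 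The payoff is twofold: you avoid the eigenvalue-continuity citation altogether, and you establish as a byproduct a geometric fact the paper explicitly leaves open in its ``Further comments'' paragraph --- namely that every closed non-simply-connected $(\M,g)$ contains a metric-space isometric copy of a round circle. The only extra input you need over the paper is that the homotopy systole is actually attained on a compact manifold, which, as you note, is standard.
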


The Gaussian kernel represents a critical or borderline case, in view of the following result of Istas \cite[Theorem~2.12]{istas}. See also \cite[Theorem~3]{flh}.
\begin{proposition}
On any complete Riemannian manifold $(\M,g)$ and for any $q>2$, there is some $\lambda>0$ such that the kernel $k(x,y) := e^{-\lambda \left[d_g(x,y)\right]^q}$ fails to be positive definite.
\end{proposition}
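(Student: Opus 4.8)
The plan is to combine Schoenberg's theorem \cite{s} with an explicit obstruction on a short geodesic segment. Recall that ``$e^{-\lambda\psi}$ is positive definite for every $\lambda>0$'' is equivalent to ``$\psi$ is conditionally negative definite'', and that a symmetric $\psi$ vanishing on the diagonal is conditionally negative definite exactly when there is a map $f$ into some Hilbert space with $\psi(x,y)=\|f(x)-f(y)\|^2$. Taking contrapositives, it suffices to show that $\psi:=d_g^q$ admits \emph{no} such Hilbert-space representation, i.e.\ to exhibit finitely many points of $(\M,d_g)$ witnessing the failure of conditional negative definiteness. Assuming $\dim\M\ge1$ (a single point being a trivial exception), I would fix $p\in\M$ and a unit vector $v\in T_p\M$, set $\gamma(t):=\exp_p(tv)$, choose $\delta>0$ small enough that $\gamma|_{[0,2\delta]}$ is minimizing, and take $x_1:=\gamma(0)$, $x_2:=\gamma(\delta)$, $x_3:=\gamma(2\delta)$, so that $d_g(x_1,x_2)=d_g(x_2,x_3)=\delta$ and $d_g(x_1,x_3)=2\delta$.

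On this triple the obstruction is transparent. If $d_g^q$ had a representation $d_g(x_i,x_j)^{q/2}=\|f(x_i)-f(x_j)\|$, then the triangle inequality in the Hilbert space would force $(2\delta)^{q/2}\le\delta^{q/2}+\delta^{q/2}=2\delta^{q/2}$; but $(2\delta)^{q/2}=2^{q/2}\delta^{q/2}>2\delta^{q/2}$ since $q>2$, a contradiction --- so $d_g^q$ is not conditionally negative definite and the proposition follows. One can also bypass Schoenberg's theorem entirely and see the failure by hand: testing the Gram matrix $\GG=\{e^{-\lambda d_g^q}(x_i,x_j)\}$ against the vector $c=(1,-2,1)$ (chosen so that $c_1+c_2+c_3=0$) gives
\[
Q(\lambda):=\sum_{i,j=1}^{3}c_ic_j\,e^{-\lambda d_g^q(x_i,x_j)}\;=\;6-8e^{-\lambda\delta^q}+2e^{-\lambda(2\delta)^q},
\]
so that $Q(0)=\bigl(\sum_i c_i\bigr)^2=0$ while $Q'(0)=8\delta^q-2(2\delta)^q=\bigl(8-2^{q+1}\bigr)\delta^q<0$, again precisely because $q>2$; hence $Q(\lambda)<0$ for all sufficiently small $\lambda>0$, so $\GG$ is not positive semi-definite and, by Definition~\ref{def: pos def}, $e^{-\lambda d_g^q}$ is not positive definite on $(\M,d_g)$ for those $\lambda$.

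I do not anticipate a genuine obstacle: the only geometric input is the existence of arbitrarily short minimizing geodesic segments --- elementary, and the single point where ``Riemannian'' is used (for a general metric space one needs instead a nontrivial geodesic, as in \cite[Theorem~3]{flh}) --- while the hypothesis $q>2$ enters only through $2^{q/2}>2$, equivalently $2^{q+1}>8$. The genuinely delicate counterpart, which this proposition does not require, is the sharp \emph{positive} direction: deciding which geodesic metric spaces carry a positive definite $e^{-\lambda d^q}$ for \emph{all} $\lambda>0$, which at the borderline exponent $q=2$ forces the ${\bf CAT}(0)$ condition (\cite[Theorem~1]{flh}).
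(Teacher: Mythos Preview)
Your argument is correct. Note, however, that the paper does not supply its own proof of this proposition: it is quoted from Istas~\cite[Theorem~2.12]{istas} (see also \cite[Theorem~3]{flh}) as background, with no argument given. So there is nothing in the paper to compare your proof against.

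On the substance: both of your routes are valid and are in fact the standard ones. The three-points-on-a-geodesic construction is exactly how one sees that $d^{q/2}$ fails the triangle inequality once $q>2$, which via Schoenberg is equivalent to the failure of conditional negative definiteness of $d^q$; your direct computation of $Q(\lambda)=c^\top\GG c$ with $c=(1,-2,1)$ is simply the same obstruction made explicit without invoking Schoenberg, and the calculus $Q(0)=0$, $Q'(0)=(8-2^{q+1})\delta^q<0$ is clean. The only geometric input---existence of a short minimizing geodesic segment through any point---is immediate from normal coordinates on a complete Riemannian manifold of dimension $\ge 1$, and your caveat about the zero-dimensional case is appropriate (the proposition as stated is vacuously false for a one-point manifold). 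Nothing is missing.
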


\subsection{Further comments}

It is remarked in \cite{cmo} that $\Lambda_+\left(\sph^1\right)=\emptyset$ --- hence Theorem~\ref{thm: Sn etc} --- can be deduced via alternative arguments from Wood~\cite{w} and Gneiting \cite{gn}. Nevertheless, the hands-on Fourier theoretic computations in \cite{cmo} are particularly suitable for extensions to more general classes of Riemannian manifolds, as will be shown in this contribution.

On the other hand, \cite{cmo} suggests that one may deduce $\Lambda_+(\M,g)=\emptyset$ for any closed Riemannian manifold, by showing that any such $(\M,g)$ admits an isometric embedding of the round circle $\sph^1$ in the metric space sense. This, in fact, seems to impose rather strong limitations on $(\M,g)$. See the theorem in Appendix~\ref{sec: appendix}. But at the moment we are unable to provide a counterexample, nor to confirm the suggestion in \cite{cmo}.

Let us also remark that variants of Gaussian kernels on $\sph^n$ and more general Riemannian manifolds are studied in theoretical works in mathematics, statistics, and machine learning. See \cite{btmd, fh, flh, gn} and many others.  

\section{Non-positive definiteness of Gaussian kernel when $\pi_1(\M) \neq \{0\}$}

In this section we prove our main theorem, which is stated in \S\ref{sec: main result} and reproduced below:
\begin{theorem}\label{thm: main}
Let $\M$ be an arbitrary closed (\emph{i.e.}, compact and boundaryless) differentiable manifold such that $\pi_1(\M) \neq \{0\}$. For any Riemannian metric $g$ on $\M$ and any parameter $\lambda >0$, the Gaussian kernel $e^{-\lambda d_g^2(\bullet,\bullet)}: \M \times \M \to [0,\infty[$ cannot be  positive definite. 
\end{theorem}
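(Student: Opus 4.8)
The plan is to reduce the statement to the case of the round circle, for which Theorem~\ref{thm: Sn etc} (via \cite{cmo}) already gives $\Lambda_+(\varrho\sph^1)=\emptyset$. The bridge is the following standard fact about positive definiteness: if $(X,d)$ is a metric space admitting a metric-space isometric embedding $\iota\colon(Y,d_Y)\hookrightarrow(X,d)$, and the Gaussian kernel $e^{-\lambda d^2(\bullet,\bullet)}$ on $X$ is positive definite, then so is $e^{-\lambda d_Y^2(\bullet,\bullet)}$ on $Y$ --- one simply evaluates the Gram matrix of the $X$-kernel on points $\iota(y_1),\dots,\iota(y_N)$ and uses $d(\iota(y_i),\iota(y_j))=d_Y(y_i,y_j)$. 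Thus it suffices to show that every closed Riemannian manifold $(\M,g)$ with $\pi_1(\M)\neq\{0\}$ contains an isometrically embedded round circle $\varrho\sph^1$ (for some $\varrho>0$) in the metric-space sense, i.e.\ a simple closed curve $\Sigma\subset\M$ such that the intrinsic distance of $\Sigma$ (which, being a circle of length $2\pi\varrho$, is the round metric) coincides with the restriction to $\Sigma\times\Sigma$ of $d_g$.

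The natural candidate for $\Sigma$ is a \emph{shortest closed geodesic} in a nontrivial free-homotopy class. Since $\M$ is compact, the infimum $\ell_0:=\inf\{\length_g(\gamma):\gamma\text{ a closed curve not null-homotopic}\}$ is positive (otherwise, arbitrarily short noncontractible loops would lie in a convex ball and be contractible) and is attained by a smooth closed geodesic $\gamma_0$ by the classical direct-method argument in the calculus of variations on the loop space; this is where one invokes ``classical comparison theorems in Riemannian geometry'' alluded to in the abstract. One should check $\gamma_0$ is \emph{simple}: if it had a self-intersection, one could split it into two shorter loops, at least one of which is noncontractible, contradicting minimality. Parametrize $\gamma_0$ by arclength on $\R/\ell_0\Z$ so that $\Sigma:=\gamma_0(\R/\ell_0\Z)$ is isometric, as a Riemannian $1$-manifold, to the round circle of radius $\varrho=\ell_0/(2\pi)$.

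The crux --- and the main obstacle --- is to verify that the intrinsic circle metric on $\Sigma$ agrees with the ambient distance $d_g$ restricted to $\Sigma$. Concretely: for $p,q\in\Sigma$ the intrinsic distance is $\min\{s,\ell_0-s\}$ where $s$ is the arclength along $\gamma_0$ from $p$ to $q$; we must rule out a shortcut through $\M$ strictly shorter than $\min\{s,\ell_0-s\}$. Suppose $\sigma$ is a minimizing $\M$-geodesic from $p$ to $q$ with $\length_g(\sigma)<\min\{s,\ell_0-s\}\le\ell_0/2$. Both arcs of $\gamma_0$ between $p$ and $q$ have length $\le\ell_0$, and concatenating $\sigma$ with either arc of $\gamma_0$ gives a closed loop; the key point is a homotopy/covering argument showing that for at least one choice of arc the resulting loop $\beta$ is \emph{not} null-homotopic while $\length_g(\beta)\le\length_g(\sigma)+\ell_0-\min\{s,\ell_0-s\}<\ell_0$, contradicting the definition of $\ell_0$. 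Establishing that one of the two concatenations is noncontractible is the delicate step --- it should follow because the union of $\sigma$ with the \emph{full} $\gamma_0$ (traversing $p\to q$ along $\sigma$ and $q\to p$ along the long arc, then comparing with the short arc) represents, up to the contractible ``bigon'' formed by $\sigma$ and the short arc, the class $[\gamma_0]\neq 0$ in $\pi_1$; hence the two loops cannot both be trivial. Once this is in hand, the theorem follows by combining the isometric embedding $\varrho\sph^1\hookrightarrow(\M,d_g)$ with $\Lambda_+(\varrho\sph^1)=\emptyset$ from \cite{cmo}.
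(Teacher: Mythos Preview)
Your proposal is correct and follows the same core idea as the paper: locate a shortest noncontractible closed geodesic, and use the ``no shortcut'' argument (if a chord $\sigma$ were strictly shorter than the short arc, then $\sigma$ concatenated with each of the two arcs gives two loops of length $<\ell_0$ whose product in $\pi_1$ is $[\gamma_0]\neq 0$, so one of them is noncontractible --- contradiction). That argument is exactly the paper's Claim~($\spadesuit$).

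The one genuine difference is that you assert the systolic infimum $\ell_0$ is \emph{attained} by some $\gamma_0$, which yields an exact metric-space isometric embedding $\varrho\sph^1\hookrightarrow(\M,d_g)$ and lets you invoke the circle result from \cite{cmo} directly. The paper deliberately avoids this: it only cites Cartan's theorem (minimizer in each fixed free-homotopy class), worries that there are infinitely many classes, and instead works with an $\varepsilon$-approximate minimizer $\gamma_\star$; then ($\spadesuit$) gives $\|d_g|_{\gamma_\star}-d_g|_\M\|\le\varepsilon$, and the proof finishes via Lemma~\ref{lemma: cmo} plus continuity of eigenvalues under perturbation of the Gram matrix. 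Your route is cleaner and bypasses the perturbation step, at the cost of needing the (true, standard) compactness fact that the systole of a closed Riemannian manifold is realized --- a minimizing sequence of closed geodesics of bounded length subconverges by Arzel\`a--Ascoli to a closed geodesic, which is noncontractible since nearby loops are homotopic. You should state and justify that attainment explicitly rather than gesturing at ``classical comparison theorems,'' and you should make the final homotopy step crisp: both concatenations $\sigma*\bar A$ and $\sigma*B$ have length $<\ell_0$, and their product is freely homotopic to $\gamma_0$, so not both are trivial.
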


That is, we prove for non-simply-connected closed manifolds $(\M,g)$ that
\begin{align*}
\Lambda_+(\M,g) := \left\{\lambda \in [0,\infty[:\, \text{$e^{-\lambda d_g^2(\bullet,\bullet)}: \M \times \M \to [0,\infty[$ is positive definite}
\right\} = \emptyset.
\end{align*}

The key to the proof is the following linear algebraic lemma, established by Da Costa--Mostajeran--Ortega \cite[pp.5--7]{cmo} by explicit computations:

\begin{lemma}[See \cite{cmo}]\label{lemma: cmo}
Given any $\lambda>0$. 
The circulant matrix 
\begin{align*}
\mathcal{K} = \begin{bmatrix}
1 & \exp\left(-\lambda \left(\frac{2\pi}{N}\right)^2\right) & \exp\left(-\lambda \left(\frac{4\pi}{N}\right)^2\right) & \cdots & \exp\left(-\lambda \left(\frac{2\pi}{N}\right)^2\right)\\
\exp\left(-\lambda \left(\frac{2\pi}{N}\right)^2\right) &1&\cdots &\cdots &\exp\left(-\lambda \left(\frac{4\pi}{N}\right)^2\right)\\
\vdots&\vdots&\ddots&\ddots&\vdots\\
\vdots&\vdots&\ddots&\ddots&\vdots\\
\exp\left(-\lambda \left(\frac{2\pi}{N}\right)^2\right)&\exp\left(-\lambda \left(\frac{4\pi}{N}\right)^2\right)&\exp\left(-\lambda \left(\frac{6\pi}{N}\right)^2\right)&\cdots&1
\end{bmatrix}
\end{align*}
has a negative eigenvalue for some sufficiently large natural number $N$ divisible by 4.
\end{lemma}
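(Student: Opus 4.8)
The plan is to diagonalise $\mathcal{K}$ explicitly, recognise its eigenvalues as an aliased sampling of the Fourier coefficients of the Gaussian profile on a circle, and then pin down the sign of those coefficients at high frequencies. To set up: $\mathcal{K}$ is the symmetric circulant matrix with first row $(c_0,\dots,c_{N-1})$, where $c_j=\exp\!\big(-\lambda\big(\tfrac{2\pi}{N}\min\{j,N-j\}\big)^2\big)=\Phi(2\pi j/N)$ and $\Phi\colon\R/2\pi\mathbb{Z}\to\R$ is the $2\pi$-periodic extension of the function $\theta\mapsto e^{-\lambda\theta^2}$ on $[-\pi,\pi]$ (equivalently, $\mathcal{K}$ is the Gram matrix of the Gaussian kernel evaluated at $N$ equidistributed points of the unit circle). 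Since $c_{N-j}=c_j$, the eigenvalues of $\mathcal{K}$ are the real numbers $\mu_m=\sum_{j=0}^{N-1}c_j\,e^{2\pi ijm/N}$ for $m=0,\dots,N-1$, so it will suffice to exhibit, for each fixed $\lambda>0$, one index $m$ and arbitrarily large $N$ divisible by $4$ with $\mu_m<0$.

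First I would establish an aliasing (Poisson-type) identity for the $\mu_m$. Writing the Fourier expansion $\Phi(\theta)=\sum_{k\in\mathbb{Z}}a_k e^{ik\theta}$ with $a_k=a_{-k}\in\R$ --- the series converges absolutely since $\Phi$ is continuous and piecewise $\CC^\infty$, which (as shown in the next step) forces $a_k=O(k^{-2})$ --- substituting and summing the inner geometric sum $\sum_{j=0}^{N-1}e^{2\pi ij(k+m)/N}$, which equals $N$ when $N\mid k+m$ and vanishes otherwise, gives $\mu_m=N\sum_{l\in\mathbb{Z}}a_{m+lN}$. For $m$ held fixed and $N>2m$, the $l=0$ term dominates the $O(k^{-2})$ tail, and therefore $\mu_m=N a_m+O_\lambda(N^{-1})$ as $N\to\infty$.

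The crux is the sign of $a_k$ for large \emph{even} $k$, and I expect this to be the only genuinely substantive computation --- and the main pitfall, because of the sign bookkeeping. The profile $\Phi$ is $\CC^\infty$ on $\R/2\pi\mathbb{Z}$ away from the antipodal angle $\theta=\pi$; there it is continuous, but $\Phi'$ jumps. Differentiating $e^{-\lambda\theta^2}$ on the two adjacent branches gives the one-sided derivatives $\Phi'(\pi^{\pm})=\pm 2\pi\lambda\, e^{-\lambda\pi^2}$, hence the jump $J:=\Phi'(\pi^{+})-\Phi'(\pi^{-})=4\pi\lambda\, e^{-\lambda\pi^2}>0$; one also checks that $\Phi''$ extends continuously across $\theta=\pi$. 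Integrating by parts twice in $2\pi a_k=\int_{-\pi}^{\pi}e^{-\lambda\theta^2}e^{-ik\theta}\,d\theta$ --- the first boundary term vanishes by periodicity of $\Phi$; the second produces exactly the corner at $\theta=\pi$, carrying the phase $e^{-ik\pi}=(-1)^k$; the remaining integral is $O(k^{-3})$ because $\Phi''$ is continuous with $\Phi''(\pi)=\Phi''(-\pi)$ --- yields
\[
a_k=-\frac{J}{2\pi k^2}(-1)^k+O_\lambda(k^{-4})=-\frac{2\lambda\, e^{-\lambda\pi^2}}{k^2}(-1)^k+O_\lambda(k^{-4}),
\]
together with the uniform bound $|a_k|\le C_\lambda k^{-2}$ invoked above. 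In particular, for every $\lambda>0$ there is a threshold $k_0(\lambda)$ with $a_k<0$ for all even $k\ge k_0(\lambda)$.

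To conclude, fix any even $m\ge k_0(\lambda)$. Then $\mu_m=N a_m+O_\lambda(N^{-1})\to-\infty$ as $N\to\infty$, so $\mu_m<0$ for all sufficiently large $N$; choosing such an $N$ that is a multiple of $4$ (which costs nothing) exhibits the desired negative eigenvalue and proves the lemma. The circulant diagonalisation and the geometric-series summation are routine; the substantive step, and the main obstacle, is the Fourier asymptotics of $\Phi$ above --- correctly identifying the single corner at the antipodal angle, getting the sign of the jump of $\Phi'$ right, and propagating the phase $(-1)^k$ and the signs cleanly through the two integrations by parts, while verifying that the remainder decays strictly faster than $k^{-2}$ (which relies on $\Phi$ being smooth off $\theta=\pi$ and on $\Phi''$ being continuous there). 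As an alternative one can avoid the Fourier series of $\Phi$ and estimate $\mu_m$ directly by Abel summation or Euler--Maclaurin, but then the quantity whose sign must be resolved is of size $O(1)$ rather than $O(N)$, and is correspondingly more delicate.
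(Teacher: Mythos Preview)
Your argument is correct. The paper itself does not supply a proof of this lemma: it is quoted verbatim from Da Costa--Mostajeran--Ortega \cite{cmo} and described there only as ``explicit'' (and later ``hands-on Fourier theoretic'') computations on equidistributed points of $\sph^1$. Your proposal is precisely a Fourier-theoretic proof of this type, and the individual steps check out: the circulant diagonalisation and the aliasing identity $\mu_m=N\sum_{l}a_{m+lN}$ are standard; the crucial sign analysis of $a_k$ is right, since the periodic profile $\Phi$ has its only singularity at the antipode $\theta=\pi$, where $\Phi$ is continuous, $\Phi'$ jumps by $J=4\pi\lambda e^{-\lambda\pi^2}>0$, and $\Phi''$ again matches; two integrations by parts then give $a_k=-\frac{J}{2\pi}\,(-1)^k k^{-2}+o(k^{-2})$, negative for all large even $k$; and the tail bound $\sum_{l\neq 0}|a_{m+lN}|=O(N^{-2})$ yields $\mu_m=Na_m+O(N^{-1})\to-\infty$.

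One cosmetic point: in the text you bound the remainder after two integrations by parts by $O(k^{-3})$ (using only that $\Phi''(\pi)=\Phi''(-\pi)$), but in the displayed formula you write $O_\lambda(k^{-4})$. In fact the sharper $O(k^{-4})$ is also correct---one more integration by parts, using that $\Phi''$ has matching endpoints, shows $\int\Phi'' e^{-ik\theta}\,d\theta=O(k^{-2})$---so you may simply align the text with the display. Either bound suffices, since all you need is that the error is $o(k^{-2})$.
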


Lemma~\ref{lemma: cmo} is used in \cite{cmo} to prove that the Gaussian kernel is never positive definite on any manifold admitting a metric space isometric embedding of the round circle $\sph^1$. Our key observation is that Lemma~\ref{lemma: cmo}  combined with classical comparison theorems in Riemannian geometry (see \emph{e.g.} \cite{dc, ce}) enables us to deduce our main Theorem~\ref{thm: main}.

Throughout, by a round circle we mean  $\sph^1$ equipped with the canonical measure, \emph{i.e.}, the measure induced by the 2-dimensional Lebesgue measure via the inclusion of $\sph^1 \subset \R^2$ as the unit circle $\{z \in \R^2:\,|z|=1\}$.

\begin{proof}[Proof of Theorem~\ref{thm: main}] 

Let $\M$ be a closed, non-simply-connected $\CC^\infty$-manifold. Fix a Riemannian metric $g$ on $\M$ and a positive number $\lambda$. Our arguments rely essentially on the classical theorem \emph{\`{a} la} \'{E}. Cartan (\cite{cartan}; see also do Carmo \cite[Theorem~12.2.2]{dc}):
\begin{th*}
For any Riemannian manifold $(\M,g)$ with $\pi_1(\M) \neq \{0\}$, in each nontrivial $1$-homotopy class there is a  \underline{shortest} closed geodesic. 
\end{th*}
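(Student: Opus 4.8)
The plan is to establish this statement by the direct method of the calculus of variations, using compactness of $\M$ in an essential way. Fix a nontrivial free $1$-homotopy class $\mathfrak{c}$ of loops in $\M$ and set
\[
\ell := \inf\left\{ \length(\gamma) :\, \gamma \text{ is a piecewise } \CC^1 \text{ loop in } \M \text{ with } [\gamma] = \mathfrak{c} \right\}.
\]
The first step is to verify that $0 < \ell < \infty$. Finiteness is immediate since $\mathfrak{c}$ is nonempty. Positivity is where compactness enters: since $\M$ is closed, it has a strictly positive injectivity radius $r_0 > 0$, and any loop of length less than $2 r_0$ has all of its points within geodesic distance $< r_0$ of its base point, hence lies inside a geodesic ball $B(p, r_0)$, which is contractible. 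Such a loop would be null-homotopic, forcing $\mathfrak{c}$ to be trivial; therefore $\ell \geq 2 r_0 > 0$.

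Next I would take a minimizing sequence $\gamma_k$ with $\length(\gamma_k) \to \ell$, each reparametrized by constant speed on $[0,1]$ so that $|\dot\gamma_k| \equiv \length(\gamma_k)$ stays bounded. This renders $\{\gamma_k\}$ uniformly Lipschitz, hence equicontinuous; as they take values in the compact manifold $\M$, the Arzel\`a--Ascoli theorem yields a subsequence converging uniformly to a limit loop $\gamma_\infty$. Two classical facts then close the existence argument: (i) loops that are uniformly closer than the injectivity radius are freely homotopic, so that $\gamma_\infty \in \mathfrak{c}$; and (ii) length is lower semicontinuous under uniform convergence (equivalently, one passes to a weak $L^2$-limit of the derivatives and invokes convexity of the norm), which gives $\length(\gamma_\infty) \leq \liminf_k \length(\gamma_k) = \ell$. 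Since membership in $\mathfrak{c}$ forces $\length(\gamma_\infty) \geq \ell$, we conclude $\length(\gamma_\infty) = \ell$, so that $\gamma_\infty$ is a shortest loop in its class.

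Finally I would upgrade this minimizer to a smooth closed geodesic by a first-variation argument. Any length-minimizing curve minimizes length between nearby points, and the standard computation of the first variation of length (which must vanish for all compactly supported variations) forces $\gamma_\infty$ to solve the geodesic equation $\na_{\dot\gamma_\infty}\dot\gamma_\infty = 0$ away from the base point; in particular it is smooth and of constant speed there. The delicate part, which I expect to be the main obstacle, is \emph{regularity at the base point}: a priori the minimizer could have a corner where the loop closes up. To rule this out, one admits variations that are free to move the base point and observes that a genuine corner would admit a length-decreasing modification --- replacing the corner by a short geodesic shortcut --- contradicting minimality. This shows the two one-sided tangent vectors at the closing point coincide, so $\gamma_\infty$ is a bona fide closed geodesic. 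Its length equals $\ell \geq 2r_0 > 0$, so it is nonconstant, and by construction it is the shortest closed geodesic in $\mathfrak{c}$, completing the proof.
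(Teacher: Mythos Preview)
Your argument is the standard direct-method proof of Cartan's theorem and is essentially correct. Note, however, that the paper does \emph{not} prove this statement at all: it is quoted verbatim as a classical result attributed to \'E.~Cartan, with a reference to do~Carmo \cite[Theorem~12.2.2]{dc}, and then used as a black box inside the proof of the main theorem. So there is no ``paper's own proof'' to compare against.

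Your proof implicitly assumes $\M$ is compact (for positive injectivity radius and for Arzel\`a--Ascoli), which the bare statement in the paper omits but which is certainly the intended hypothesis in context, since the ambient main theorem concerns closed manifolds. With that understood, each of your steps --- positivity of the infimum via the injectivity radius, extraction of a uniform limit from a constant-speed minimizing sequence, stability of the free homotopy class under $C^0$-small perturbations, lower semicontinuity of length, and the corner-cutting argument at the closing point --- is correct and is exactly how the cited reference proceeds.
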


We want to pick ``the shortest of all such shortest closed geodesics''. Note, however, that there may be infinitely many such geodesics, as $\pi_1(\M)$ is only known to be finitely generated (but not finite) in general. To avoid complications due to  non-uniqueness of generators, we resort to the following arguments, which allow us to select the ``almost'' shortest nontrivial closed geodesic on $(\M,g)$.

For any (piecewise differentiable) loop $\gamma$, we write 
\begin{align*}
\Omega_{[\gamma]}:= \text{ the free homotopy class of loops containing $\gamma$.} 
\end{align*}
Then set
\begin{align*}
\kappa_0 := \inf \left\{ \length[\gamma]:\, \text{$\gamma$ is a shortest closed geodesic in $\Omega_{[\gamma]}$, with } \Omega_{[\gamma]} \neq \{0\} \text{ in $\pi_1(\M)$} \right\}.
\end{align*}
The infimum is taken over all closed geodesics $\gamma$ and then over all free homotopy classes $\Omega_{[\gamma]}$.

First, we claim that
 \begin{align*}
 \kappa_0 > 0.
\end{align*}  
To see this, if $\kappa_0=0$ then there  would be a closed geodesic $\gamma'$,  which is arbitrarily short, with $\Omega_{[\gamma']} \neq \{0\}$. But as $\M$ is a compact Riemannian manifold, there is a positive number $\eta_0 = \eta_0(\M)$ such that any closed curve with length less than $\eta_0$ must be null homotopic. Indeed, one may pick a finite open covering on $\M$ consisting of geodesic normal charts, and take $\eta_0$ to be the Lebesgue number of this covering. This leads to contradiction.

For an $\e \in \left]0, \frac{\kappa_0}{100}\right]$ to be specified later, we pick $\gamma_\star$, a shortest closed geodesic in its own free homotopy class (which is non-trivial), such that 
\begin{align*}
\length\left[\gamma_\star\right] \leq \kappa_0 + \e.
\end{align*}
Then, for some big number $N \in \mathbb{N}$ to be specified later, we pick $N$ points $\{x_1, \ldots, x_N\} \subset \gamma_\star$ equidistributed with respect to the  metric-induced distance $d_g$ \emph{restricted to $\gamma_\star$}. In formula, for any $1 \leq j < i \leq N$ we require that 
\begin{equation*}
\left[d_g\big|_{\gamma_\star}\right]\left(x_i,x_j\right) := \begin{cases}
\frac{2\pi}{N} (i-j)\qquad \text{ if } i-j \leq \frac{N}{2},\\
\frac{2\pi}{N} (j+N-i)\qquad \text{ if } i-j > \frac{N}{2}.
\end{cases}
\end{equation*}
In other words, $\{x_1, \ldots, x_N\}$ on $\gamma_\star$ satisfy the same distance relations --- when considered with respect to $d_g$ on $\gamma_\star$ --- as for $N$ equidistributed points on the round circle $\sph^1$. This can be done as $\gamma_\star$ is diffeomorphic to $\sph^1$ by construction; in particular, the shortest condition implies the simplicity or primality of $\gamma_\star$.

To proceed, we claim that the distance relations of these $N$ points are almost the same when considered with respect to $d_g$ on the whole manifold $\M$. More precisely:
\begin{equation*}
0 \leq \left[d_g\big|_{\gamma_\star}\right]\left(x_i,x_j\right) -\left[d_g\big|_{\M}\right]\left(x_i,x_j\right) \leq \e\qquad \text{for any }1 \leq j < i \leq N.\tag{$\spadesuit$}
\end{equation*}

\begin{proof}[Proof of Claim~($\spadesuit$)]
The non-negativity of $\left[d_g\big|_{\gamma_\star}\right]\left(x_i,x_j\right) -\left[d_g\big|_{\M}\right]\left(x_i,x_j\right)$ is clear, as  distances are defined by taking infima.

To prove the other inequality, suppose for some $i,j$ that the difference where greater than $\e$. Then there would be a curve $\sigma$ ({\bf s}igma for ``{\bf s}hort-cut'') with endpoints $x_i$, $x_j$ whose concatenation with $\gamma_\star^+$ is a piecewise-$\CC^\infty$ loop satisfying
\begin{align*}
\length\left[\sigma \,\#\, \gamma^+_\star\right] <  \length\left[\gamma_\star\right] -\e.
\end{align*}
Here and hereafter, $\gamma_\star^+$ denotes the longer of the two geodesic segments on $\gamma_\star$ with endpoints $x_i$ and $x_j$, and $\gamma_\star^-$ denotes the shorter one. (The choice is clear when the two segments have equal length, for which case we may safely omit the details.)

The above inequality  implies  
\begin{align*}
\length\left[\sigma \,\#\, \gamma^+_\star\right] <  \kappa_0.
\end{align*}
Hence, the loop $\sigma \,\#\, \gamma^+_\star$ must be null-homotopic, in view of the minimality condition in  definition of $\kappa_0$. In this case, however, the loop $\sigma \,\#\,\gamma^-_\star$ cannot be null-homotopic, for otherwise $\gamma_\star$ must be null-homotopic, which can be seen by contracting $\sigma \,\#\, \gamma^+_\star$ and $\sigma \,\#\, \gamma^-_\star$ sequentially.

 Therefore, from the shortness of $\sigma$ (\emph{i.e.}, $\length[\sigma] < \length\left[\gamma_\star^-\right]-\e$), the definition of $\gamma_\star^\pm$, and the choice for $\gamma_\star$, we deduce that
\begin{align*}
\length\left[\sigma \,\#\,\gamma^-_\star\right] &< 2\,\length\left[\gamma^-_\star\right] - \e\\
&\leq \length\left[\gamma_\star^+\right] + \length\left[\gamma_\star^-\right]- \e\\
&= \length\left[\gamma_\star\right] - \e\\
&\leq \kappa_0.
\end{align*}
 This contradicts the definition of $\kappa_0$.   \end{proof}
 
 Now let us consider the Gram matrix $$\GG=\GG_N \in {\rm Mat}\left(N \times N;\R_+\right)$$ for the Gaussian kernel with parameter $\lambda>0$. Its entries are
 \begin{align*}
 \GG_{ij} =e^{-\lambda d_g^2(x_i, x_j)}\qquad \text{ for } i,j \in \{1,2,\ldots,N\},
 \end{align*}
 where $\{x_1, \ldots, x_N\}$ are the points on the closed geodesic $\gamma_\star$ selected above. Here, the distance function $d_g$ must be understood as $\left[d_g\big|_{\M}\right]$ rather than the restricted version.

  The estimate ($\spadesuit$) and Taylor expansion applied to $e^{-\lambda (A+\e)} - e^{-\lambda A}$ for $A>0$ imply that
 \begin{align*}
 \left\|\GG-\mathcal{K}\right\|_\infty := \max_{i,j \in \{1,2,\ldots,N\}}  \left|\GG_{ij}-\mathcal{K}_{ij}\right| \leq C_0\e,
 \end{align*}
where $C_0=C_0(\lambda)$ and $\mathcal{K}$ is  the circulant matrix appearing in Lemma~\ref{lemma: cmo}: 
\begin{align*}
\mathcal{K} = \begin{bmatrix}
1 & \exp\left(-\lambda \left(\frac{2\pi}{N}\right)^2\right) & \exp\left(-\lambda \left(\frac{4\pi}{N}\right)^2\right) & \cdots & \exp\left(-\lambda \left(\frac{2\pi}{N}\right)^2\right)\\
\exp\left(-\lambda \left(\frac{2\pi}{N}\right)^2\right) &1&\cdots &\cdots &\exp\left(-\lambda \left(\frac{4\pi}{N}\right)^2\right)\\
\vdots&\vdots&\ddots&\ddots&\vdots\\
\vdots&\vdots&\ddots&\ddots&\vdots\\
\exp\left(-\lambda \left(\frac{2\pi}{N}\right)^2\right)&\exp\left(-\lambda \left(\frac{4\pi}{N}\right)^2\right)&\exp\left(-\lambda \left(\frac{6\pi}{N}\right)^2\right)&\cdots&1
\end{bmatrix}.
\end{align*}

To conclude, recall from Lemma~\ref{lemma: cmo} (\emph{cf.} Da Costa--Mostajeran--Ortega \cite[pp.5--7]{cmo}) that $\mathcal{K}$ has a negative eigenvalue for sufficiently large $N=N(\lambda) \equiv 0 \mod 4$. On the other hand, having a negative eigenvalue is an open condition, and eigenvalues vary continuously with respect to the matrix entries\footnote{This follows from the classical perturbation theorem of Kato \cite[pp.126--127, Theorem 5.2]{kato}. It was claimed in \cite{g}, where the famous Ger\u{s}gorin's disc theorem was established, but the arguments appear to be incomplete. See the recent work~\cite{lz} for a rigorous proof and discussions.}. Hence, by choosing sufficiently small $\e = \e(\lambda)>0$, the Gram matrix $\GG$ also has a negative eigenvalue for some large $N=N(\lambda) \in \mathbb{N}$.

Therefore,  by Definition~\ref{def: pos def} of positive definiteness of kernels, we conclude that the Gaussian kernel $e^{-\lambda d_g^2(\bullet,\bullet)}: \M \times \M \to [0,\infty[$ cannot be positive definite. Here $\lambda>0$ and the Riemannian metric $g$ on $\M$ are arbitrary. The theorem is proved.     \end{proof}

Let us comment on why the above proof fails for simply-connected Riemannian manifolds.

Although it remains true that there is a closed geodesic $\gamma_\star$ on simply-connected $(\M,g)$ --- this is the theorem of Lyusternik--Fet \cite{lf}; see also Klingenberg \cite[Appendix \S~A.1]{k} --- we cannot ensure the shortness of $\gamma_\star$ in any sense.  Synge (\cite{sy}; \emph{cf.} also \cite[\S 9, Corollary~3.10(a)]{dc}) proved the simple-connectivity of closed, orientable, even-dimensional Riemannian manifolds with positive sectional curvature by establishing the non-existence of shortest closed geodesics. Heuristically, this is because any loop is null-homotopic in $\M$, so nothing would stop $\gamma_\star$ from shrinking its length. Thus, we do not have an analogue of ($\spadesuit$) in the proof of Theorem~\ref{thm: main} above, for there always exist ``short-cuts'' which decrease the length of $\gamma_\star$. This renders invalid  eigenvalue stability arguments based on Lemma~\ref{lemma: cmo}.







\section{Remarks}   

In this paper, we have proved  that for each parameter $\lambda>0$, the Gaussian kernel $e^{-\lambda d_g^2(\bullet, \bullet)}$ on any non-simply-connected closed Riemannian manifold $(\M,g)$ fails to be positive definite. Indeed, we establish ---- relying crucially on the computations in \cite{cmo}, especially Lemma~\ref{lemma: cmo} --- the existence of a large number $N=N(\lambda)$ such that for $N$ sample points equidistributed on a nontrivial closed geodesic on $\M$, the associated $N \times N$ Gram matrix $\GG \equiv \GG_N$ for the Gaussian kernel is not positive semi-definite.

Nevertheless, \emph{for any fixed $N$} the Gram matrix $\GG_N$  converges in the $L^\infty$-norm to the identity matrix as $\lambda \to \infty$.  Hence, as the parameter $\lambda>0$ increases, we need to take larger number $N \to \infty$ of sample points to detect the failure of positive definiteness of the Gaussian kernel. See  \cite[\S 5]{cmo}.  We are thus led to propose the following
\begin{cq*}
On complete Riemannian manifolds the Gaussian kernel $e^{-\lambda d_g^2(\bullet,\bullet)}$ is positive definite on coarser scales, while for compact manifolds of dimension $\geq 1$, positive definiteness is lost  on finer scales. Formulate and prove a quantitative description of this phenomenon.
\end{cq*}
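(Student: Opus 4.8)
Let me propose one route to the above Conjecture/Question, first pinning down a precise statement. Fix $(\M,g)$ closed with $\pi_1(\M)\neq\{0\}$; write $n=\dim\M$ and let $\ell_0=\ell_0(\M,g)>0$ be the length of a shortest noncontractible closed geodesic (positive by the argument in the proof of Theorem~\ref{thm: main}). Let $\mathcal{T}_\lambda$ be the compact self-adjoint integral operator on $L^2(\M,\dd V_g)$ with kernel $e^{-\lambda d_g^2(\bullet,\bullet)}$, and put $\mu_-(\lambda):=-\lambda_{\min}(\mathcal{T}_\lambda)\geq0$, the \emph{depth of failure of positive definiteness at scale $\lambda$}; by a standard density argument, positive definiteness of a continuous kernel on a compact space is equivalent to positive semi-definiteness of its integral operator, so $\mu_-(\lambda)>0$ exactly when $\lambda\notin\Lambda_+(\M,g)$, and Theorem~\ref{thm: main} asserts precisely that $\mu_-(\lambda)>0$ for all $\lambda>0$. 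Let also $N_\star(\lambda)\in\mathbb{N}$ denote the least $N$ for which some $N$-point Gram matrix of $e^{-\lambda d_g^2(\bullet,\bullet)}$ fails to be positive semi-definite. The quantitative description I would aim for begins with
\begin{align*}
\mu_-(\lambda)\ \asymp_{(\M,g)}\ \lambda\qquad\text{as }\lambda\to0^+,
\end{align*}
and continues: as $\lambda\to\infty$, $\mu_-(\lambda)\to0$ --- at the rate $\lambda^{-p}e^{-\ell_0^2\lambda/4}$ when $(\M,g)$ is flat (the flat-torus case), and, conjecturally, only polynomially, $\asymp\lambda^{-q}$ with $q$ and the constant governed by the curvature, when $(\M,g)$ is not flat --- while $N_\star(\lambda)=4$ for all small $\lambda$ and $N_\star(\lambda)\to\infty$ as $\lambda\to\infty$, linearly ($\asymp_{(\M,g)}\ell_0^2\lambda$) in the flat case and conjecturally more slowly (perhaps polylogarithmically) when curvature is present. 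In short: positive definiteness is lost only to first order in $\lambda$ at coarse scales, and only by a vanishing amount at fine scales, requiring ever more samples to detect.

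The coarse end ($\lambda\to0^+$) is soft. Expanding $\mathcal{T}_\lambda=|\M|\,P_0-\lambda\,\mathcal{A}+O(\lambda^2)$, where $P_0$ projects onto constants and $\mathcal{A}$ has kernel $d_g^2(\bullet,\bullet)$, first-order perturbation of the degenerate eigenvalue $0$ of $|\M|\,P_0$ shows that the eigenvalues of $\mathcal{T}_\lambda$ bifurcating from $0$ are $-\lambda$ times those of the centered squared-distance operator on $(\mathrm{constants})^{\perp}$. The latter has a strictly positive eigenvalue precisely when $e^{-\lambda d_g^2(\bullet,\bullet)}$ fails to be positive definite for some $\lambda>0$ (Schoenberg's theorem \cite{s}), which holds here --- for \emph{every} compact Riemannian manifold, by \cite{flh}. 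Hence $\mu_-(\lambda)=c_1(\M,g)\,\lambda+O(\lambda^2)$ with $c_1>0$. On the discrete side, $N_\star(\lambda)\geq4$ always, since every three-point metric space embeds isometrically in the Euclidean plane, so every $3\times3$ Gram matrix of $e^{-\lambda d^2}$ is positive semi-definite; and $N_\star(\lambda)\leq4$ for small $\lambda$ by running the proof of Theorem~\ref{thm: main} with $N=4$ equidistributed points on $\gamma_\star$, where the governing circulant eigenvalue is $1-2a+a^4$ with $a=e^{-\lambda(\ell_0/4)^2}$ --- strictly negative throughout an interval $]0,c_0\ell_0^{-2}[$, and of order $-\lambda\ell_0^2$ as $\lambda\to0^+$ --- so that $(\spadesuit)$ (with $\e$ small) transfers the negative eigenvalue to $\M$.

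The fine end ($\lambda\to\infty$) is the substance. For the upper bound one compares $e^{-\lambda d_g^2(\bullet,\bullet)}$ with a smooth, positive semi-definite ``geodesic-summed Gaussian'' $\widetilde K_\lambda(x,y)$, the sum of $e^{-\lambda r^2}$ over the lengths $r$ of all geodesic arcs from $x$ to $y$: on the flat torus this is a genuine periodized Gaussian --- smooth, and positive semi-definite by Poisson summation --- and $e^{-\lambda d_g^2(\bullet,\bullet)}-\widetilde K_\lambda$, being the sum over non-minimizing arcs, is $\asymp e^{-\ell_0^2\lambda/4}$ in $L^\infty$ (at the midpoint of $\gamma_\star$ the two nearest lattice points lie at distance $\ell_0/2$), so $\mu_-(\lambda)\leq\|e^{-\lambda d_g^2(\bullet,\bullet)}-\widetilde K_\lambda\|_{\mathrm{op}}\lesssim\lambda^{p'}e^{-\ell_0^2\lambda/4}$; whether $\widetilde K_\lambda$ (or a suitable variant, with $\ell_0/2$ replaced by the minimal distance from a point to its cut locus) stays positive semi-definite on a general curved $(\M,g)$ is the crux here. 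For the matching lower bound one sharpens Lemma~\ref{lemma: cmo} quantitatively: by the discrete-Fourier/Poisson-summation identity the eigenvalues of the circulant $\mathcal{K}$ are the aliased Fourier coefficients of $e^{-\lambda\min(|t|,\ell_0-|t|)^2}$ on the circle of circumference $\ell_0$, and a Jacobi-theta/saddle-point estimate places its first negative coefficient at frequency $m_\star\asymp\ell_0^2\lambda$, of size $\asymp e^{-\ell_0^2\lambda/4}/(\ell_0^2\lambda)$; then, taking $N\gtrsim m_\star$ equidistributed points on $\gamma_\star$, feeding $(\spadesuit)$ with $\e$ exponentially small, and spreading unit-mass bumps of width $\asymp(\ell_0\lambda)^{-1}$ about the sample points to pass from the Gram matrix back to $\mathcal{T}_\lambda$, one obtains $\mu_-(\lambda)\gtrsim\lambda^{-n}e^{-\ell_0^2\lambda/4}$ and $N_\star(\lambda)\lesssim\ell_0^2\lambda$.

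The main obstacle is twofold. First, the \emph{lower} bound on $N_\star(\lambda)$ --- that one genuinely needs $\gtrsim\ell_0^2\lambda$ samples --- requires ruling out every competing configuration, in particular a dense cloud packed into a geodesic ball of radius $\asymp\lambda^{-1/2}$, where the curvature already makes the induced distances non-Euclidean; handling all configurations at once seems to demand a quantitative Marcinkiewicz--Zygmund / cubature inequality linking finite Gram matrices to $\mathcal{T}_\lambda$, together with the localization of the negative spectrum of $\mathcal{T}_\lambda$ at spatial frequencies $\asymp\ell_0^2\lambda$, so that it escapes any sub-Nyquist sampling. Second --- and here the clean guess may need amending --- on a \emph{curved} manifold that same ball-packing heuristic suggests that curvature contributes a \emph{polynomial} (in $\lambda^{-1}$) floor to $\mu_-(\lambda)$ at fine scales, which would dominate the exponential cut-locus term; deciding whether this floor is real amounts to controlling the sign of the curvature perturbation of a Euclidean-Gaussian Gram matrix, and settling it would fix the true fine-scale rate. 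A subsidiary point is to express all constants through a single scale --- the systole, or the injectivity radius --- rather than the full geometry of $(\M,g)$; and, since the fine-scale half presupposes $\Lambda_+(\M,g)=\emptyset$, for simply-connected $\M$ it remains conditional on the open problem recalled in \S\ref{sec: intro}.
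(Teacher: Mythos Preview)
The statement you address is the paper's \emph{Conjecture/Question}: it is posed as an open problem, and the paper offers no proof --- only the heuristic remark that for fixed $N$ the Gram matrix $\GG_N$ tends to the identity as $\lambda\to\infty$, together with a pointer to \cite[\S 5]{cmo}. There is therefore nothing in the paper to compare your attempt against.

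What you have written is a research programme rather than a proof, and you are explicit about this: several assertions are flagged as conjectural and you close by naming the two chief obstacles. The pieces you do argue are essentially correct. The coarse-scale claim $\mu_-(\lambda)\asymp\lambda$ follows from $\mathcal{T}_\lambda=|\M|\,P_0-\lambda\mathcal{A}+O(\lambda^2)$ together with the fact (via Schoenberg and \cite{flh}, since no closed manifold of dimension $\geq1$ is ${\bf CAT}(0)$) that $\mathcal{A}$ is not negative semi-definite on $(\mathrm{constants})^\perp$; a single variational test vector gives the lower bound and $\|\mathcal{T}_\lambda-|\M|\,P_0\|=O(\lambda)$ the upper. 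The computation that $N_\star(\lambda)=4$ for small $\lambda$, via the circulant eigenvalue $1-2a+a^4<0$ with $a=e^{-\lambda(\ell_0/4)^2}$, is also right; indeed, if you take $\gamma_\star$ to be an actually systolic geodesic --- the infimum $\kappa_0$ is attained on a closed manifold by an Arzel\`a--Ascoli argument --- then $(\spadesuit)$ holds with $\e=0$ and no perturbation is needed at all (so $\gamma_\star$ is a genuine metric-space isometric embedding of a round circle, which incidentally speaks to the question raised in the paper's \S\ref{sec: intro}). The fine-scale half --- the exponential rate on flat tori via the periodised Gaussian and Poisson summation, the saddle-point localisation of the first negative aliased Fourier coefficient at frequency $\asymp\ell_0^2\lambda$, and above all whether curvature produces a polynomial floor --- remains, as you yourself say, heuristic. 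This is a reasonable blueprint for attacking the open question, but it does not resolve it, nor does it purport to.
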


It should also be remarked (see \cite[\S 3]{cmo}) that one may construct finite metric spaces $\M^0$ (which are $0$-dimensional manifolds) with complicated $\emptyset \varsubsetneq\Lambda_+\left(\M^0\right)\varsubsetneq \, ]0,\infty[$. Also, Sra \cite{sra} proved that for $X={\bf Sym}^{++}(n)$ with  S-divergence, it holds that $\Lambda_+(X) = \bigcup_{i=1}^{n-2}\left\{\frac{i}{2}\right\} \cup \left[\frac{n-1}{2}, \infty\right[$.

\appendix
\section*{Appendix}
\addcontentsline{toc}{section}{Appendices}
\renewcommand{\thesubsection}{\Alph{subsection}}

\subsection{Two notions of isometric embeddings}\label{sec: appendix}

An isometric embedding in the sense of metric spaces between Riemannian manifolds is defined in Definition~\ref{def: text}. Then, Remark~\ref{remark} pointed out that this is very different from the usual (differential geometric) notion of isometric embedding of Riemannian manifolds.  This appendix is devoted to elaborating on this issue. Recall that

\begin{definition*}
Let $(X,d)$ and $(X',d')$ be metric spaces. A map $\iota: X \to X'$ is said to be an isometric embedding if $\iota$ is one-to-one and $d'\big(\iota(x), \iota(y)\big)=d(x,y)$ for all $x, y \in X$. 
\end{definition*}

\begin{definition*}
Let $(\M,g)$ and $(\M',g')$ be complete Riemannian manifolds. We say that $\iota: (\M,g)\emb(\M',g')$ is a metric space isometric embedding if $\iota$ is an embedding between $\CC^\infty$-manifolds, and  $\iota: \left(\M,d_g\right)\emb\left(\M',d_{g'}\right)$ is a metric space isometric embedding.
\end{definition*}

The above definition is \emph{non-local} in nature --- instead of the pointwise coincidence of length of vectorfields in the differential geometric notion of isometric embedding, here one requires the coincidence of geodesic distance, which is defined via the length of curves. 


In contrast, the differential geometric notion of isometric embeddings between Riemannian manifolds, as in the seminal works by Nash~\cite{n1, n2} (see the comprehensive survey~\cite{hh} by Han--Hong), is as follows --- $\iota: (\M,g) \emb (\M',g')$ is an isometric embedding if $\iota$ is an embedding of $\CC^\infty$-manifolds and $g'\big(\dd\iota(X), \dd\iota(Y)\big)=g(X,Y)$ for any vectorfields $X,Y$ on $\M$. This is a \emph{local} condition: it only requires the inner products induced by Riemannian metrics to coincide at each pair of points $\big(p, \iota(p)\big) \in \M \times \M'$; or equivalently, the length of each pair of $\iota$-related vector $X\big|_p \in T_p \M$ and $\dd\iota(X)\big|_{\iota(p)} \in T_{\iota(p)}\M'$ to be identical, taken with respect to $g$ and $g'$.

It should be emphasised that these two notions of isometric embeddings between Riemannian manifolds are essentially distinctive. Consider the following simple example.

\begin{example*}
The round circle $\sph^1$ is clearly isometric embeddable into the 2-dimensional Euclidean space  in the sense of differential geometry,  via the inclusion $\iota$ of the unit circle in $\R^2$. However, $\iota$ is not an isometric embedding in the sense of metric spaces, since any pair of antipodal points $\{p_+, p_-\}$ on $\sph^1$ have intrinsic distance $\pi$, while their extrinsic distance (\emph{i.e.}, with respect to the Euclidean distance on the ambient space) equals $2$ in $\R^2$.

One way to understand this phenomenon is that in $\R^2$, the diameter segment between $p_\pm$  serves as a ``short-cut'' relative to the geodesic distance on $\sph^1$ taken along the circumference. 
\end{example*}

More generally, by Nash's  theorem~\cite{n2}, any Riemannian manifold $(\M,g)$ can be isometrically embedded into some Euclidean space in the differential geometric sense. However, unless the manifold $(\M,g)$ is itself Euclidean flat, such an isometric embedding would never be a metric space isometric embedding, as proved by Feragen--Lauze--Hauberg \cite{flh}.

Examples of Riemannian manifolds admitting metric space isometric embeddings of the round circle $\varrho\sph^1$ for some scaling factor $\varrho>0$ include  $\sph^n$ for $n \geq 1$, $\R{\mathbb{P}}^n$ for $n > 1$, and the real Grassmannians ${\bf Gr}(k,n)$ for $1 \leq k <n$. This, in fact, is how Theorem~\ref{thm: Sn etc} was proved in \cite{cmo}.

We conclude our clarification on the notions of isometric embeddings by the simple observation below, which sheds further lights to the examples above. 

\begin{th*}
Let $(\M,g)$ and $(\M',g')$ be complete Riemannian manifolds.  
\begin{enumerate}
\item
If a submanifold $(\M,g) \subset (\M',g')$ (\emph{i.e.}, the inclusion map $\iota$) is an isometric embedding in the sense of metric spaces, then $\iota$ is necessarily totally geodesic, hence an isometric embedding in the sense of differential geometry.

\item
Let $\pi: (\M,g) \to (\M,g)$ be a surjective isometry from the metric space $(\M,d_g)$ to itself. Then $\pi$ is isometric in the differential geometric sense.  In particular, if $f: (\M,g) \to (\M,g)$ is a metric space isometric embedding and is surjective, then it is an isometric embedding in the differential geometric sense. 
\end{enumerate}
\end{th*}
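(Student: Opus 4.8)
The plan is to treat the two parts separately: Part (1) by a direct local analysis of geodesics, and Part (2) by reducing to a classical rigidity theorem.

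For Part (1), I would argue locally. Fix $p\in\M$ and a $g$-unit vector $v\in T_p\M$, and let $\gamma\colon(-\delta,\delta)\to\M$ be the $g$-geodesic with $\gamma(0)=p$, $\dot\gamma(0)=v$, with $\delta>0$ small enough that $\gamma$ is minimising on $[-\delta,\delta]$. Then every sub-arc is minimising, so $d_g(\gamma(s),\gamma(t))=|t-s|$ for $s,t\in[-\delta,\delta]$. The metric-space hypothesis $d_{g'}(\iota(x),\iota(y))=d_g(x,y)$ then forces the \emph{smooth} curve $c:=\iota\circ\gamma$ to satisfy $d_{g'}(c(s),c(t))=|t-s|$ on every subinterval, i.e. $c$ realises the $g'$-distance between its endpoints and is parametrised by arc length. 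By the regularity theory for length minimisers (a curve parametrised proportionally to arc length that locally minimises length is a smooth geodesic; see \emph{e.g.} do Carmo \cite{dc}), $c=\iota\circ\gamma$ is a $g'$-geodesic. Differentiating the arc-length identity $\int_0^t\big|\dd\iota(\dot\gamma(\tau))\big|_{g'}\,\dd\tau=t$ at $t=0$ gives $\big|\dd\iota(v)\big|_{g'}=1=|v|_g$; since $v$ is an arbitrary $g$-unit vector, homogeneity and polarisation give $g'(\dd\iota X,\dd\iota Y)=g(X,Y)$, so $\iota$ is an isometric embedding in the differential-geometric sense. Finally, identifying $\M$ with $\iota(\M)\subset\M'$ via this embedding and using the Gauss formula $\na'_XY=\na_XY+\two(X,Y)$: for the geodesic $\gamma$ one has $\na_{\dot\gamma}\dot\gamma=0$, while $\iota\circ\gamma$ being a $g'$-geodesic gives $\na'_{\dot\gamma}\dot\gamma=0$, hence $\two(v,v)=0$. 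As $v$ ranges over all unit directions and $\two$ is symmetric, polarisation gives $\two\equiv0$, i.e. $\iota(\M)$ is totally geodesic.

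For Part (2), note first that a surjective metric-space isometry $\pi$ of $(\M,d_g)$ is automatically injective, hence a bijective distance-preserving self-map of the Riemannian manifold $(\M,g)$; this is exactly the setting of the Myers--Steenrod theorem, which gives that any such map is a $\CC^\infty$-diffeomorphism with $\pi^\ast g=g$, i.e. an isometry in the differential-geometric sense. The ``in particular'' clause is then immediate, since a surjective metric-space isometric embedding is the same thing as a bijective distance-preserving map. If one wants a self-contained argument rather than citing Myers--Steenrod, I would run it in parallel with Part (1): $\pi$ is $1$-Lipschitz, hence continuous, and carries minimising geodesics to minimising geodesics, so in geodesic normal coordinates centred at $p$ and at $\pi(p)$ it has the form $\pi=\exp_{\pi(p)}\circ\,L_p\circ\exp_p^{-1}$ for a norm-preserving map $L_p\colon T_p\M\to T_{\pi(p)}\M$; it then remains to show that $L_p$ is linear (a first-variation / law-of-cosines estimate on the side lengths of small geodesic triangles), after which smoothness and $\pi^\ast g=g$ follow.

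I expect the main obstacle in Part (1) to be the regularity claim — that a curve realising the distance between its endpoints on every subinterval, parametrised by arc length, is a \emph{smooth} geodesic of the ambient manifold — since this genuinely needs the regularity theory for length minimisers rather than a formal computation. In Part (2), if one does not simply quote Myers--Steenrod, the technical heart, and the main obstacle, is the linearity of the tangent-space map $L_p$.
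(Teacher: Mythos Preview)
Your proposal is correct and follows the same approach as the paper: for Part~(1) both argue that a metric-space isometric embedding sends geodesics to geodesics, whence the inclusion is totally geodesic and a Riemannian isometric embedding, and for Part~(2) both simply invoke Myers--Steenrod. The paper's proof is a two-line sketch of exactly this idea; your version fills in the details (the local minimising argument, the norm computation via arc length, and the Gauss-formula step for $\two\equiv 0$) that the paper leaves implicit.
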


\begin{proof}
By Definition~\ref{def: text}, being a metric space isometric embedding, $\iota: (\M,g) \emb (\M',g')$ preserves geodesic distances. In particular, it takes geodesics to geodesics. This proves the first statement. The second statement is the content of a theorem by Myers--Steenrod \cite{ce}.  \end{proof}

\medskip

\noindent
{\bf Acknowledgement}.
The research of SL is supported by NSFC (National Natural Science Foundation of China) Grant No.~12201399, SJTU--UCL joint seed fund, and Shanghai Frontiers Science Center of Modern Analysis.


\begin{thebibliography}{99}




 
\bibitem{bns}
M. Belkin, P. Niyogi, and V. Sindhwani, \textit{Manifold regularization: A
geometric framework for learning from labeled and unlabeled examples}. \textit{Journal of
Machine Learning Research (JMLR)}, \textbf{7} (2006), 2399--2434.

\bibitem{boc}
S. Bochner, Vorlesungen über Fouriersche Integrale, Leipzig, 1932.


\bibitem{btmd}
V. Borovitskiy, A. Terenin, P. Mostowsky, and M. Deisenroth, Mat\'{e}rn Gaussian processes on Riemannian manifolds, In: H. Larochelle, M. Ranzato, R. Hadsell, M. Balcan, and H. Lin (eds.), \textit{Advances in Neural Information Processing Systems}, Vol. 33, pp. 12426--12437. Curran Associates, Inc. (2020). 

\bibitem{cal}
S. Calinon, Gaussians on Riemannian manifolds: Applications for robot learning and adaptive control, \textit{IEEE Robotics $\&$ Automation Magazine} \textbf{27} (2020), 33--45.


\bibitem{dc}
M.~P. do Carmo, \textit{Riemannian Geometry, Mathematics}:
Theory $\&$  Applications, Birkh\"{a}user Boston, Inc., Boston, MA, 1992.

\bibitem{cartan}
É. Cartan, Leçons sur la Géométrie des Espaces de Riemann, (French) 2d ed. \textit{Gauthier-Villars}, Paris, 1946. 

\bibitem{chv}
O. Chapelle, P. Haffner, and V.~N. Vapnik. Support vector
machines for histogram-based image classification. \textit{Neural
Networks, IEEE Transactions}, \textbf{10} (1999), 1055--1064.

\bibitem{ce}
J. Cheeger and D.~G. Ebin,  \textit{Comparison theorems in Riemannian geometry}. North-Holland Mathematical Library, Vol. 9. North-Holland Publishing Co., Amsterdam-Oxford; American Elsevier Publishing Co., Inc., New York, 1975. 




\bibitem{cmo}
N. Da Costa, C. Mostajeran, and J.-P. Ortega, The Gaussian kernel on the circle and spaces that admit isometric embeddings of the circle. ArXiv Preprint: 2302.10623v1 [cs.LG].

 


\bibitem{cs}
F. Cucker and S. Smale, On the mathematical foundations of learning. \textit{Bull. Amer. Math. Soc.} \textbf{39} (2001), 1--49.

\bibitem{fh}
A. Feragen and S. Hauberg, Open problem: kernel methods on manifolds and metric spaces. What is the probability of a positive definite geodesic exponential kernel? In: V. Feldman, A. Rakhlin, O. Shamir (eds.), 29th Annual Conference on Learning Theorey. \textit{Proceedings of Machine Learning Research}, vol. 49, pp. 1647--1650. PMLR, Columbia University, New York, New York, USA (23--26 Jun 2016).

\bibitem{flh}
A. Feragen, F. Lauze, and S. Hauberg, Geodesic exponential kernels: When curvature and linearity conflict. In: \emph{2015 IEEE Conference on Computer Vision and Pattern Recognition (CVPR)}, pp. 3032--3042 (2015).

\bibitem{g}
S. Ger\u{s}gorin, \"{U}ber die Abgrenzung der Eigenwerte einer Matrix. \textit{Izv. Akad. Nauk SSSR Ser. Mat.} \textbf{1} (1931), 749--754.

\bibitem{gn}
T. Gneiting, Strictly and non-strictly positive definite functions on spheres, \textit{Bernoulli} \textbf{19} (2013), 1327--1349.

\bibitem{go}
L. Grigoryeva and J.-P. Ortega, Dimension reduction in recurrent networks by canonicalization, \textit{J. Geom. Mech.} \textbf{13} (2021), 647--677.

\bibitem{hh}
Q. Han and J.-X. Hong,
\textit{Isometric Embedding of Riemannian Manifolds in Euclidean Spaces},
American Mathematical Society: Providence, 2006.

\bibitem{istas}
J. Istas, Manifold indexed fractional fields, \textit{ESAIM: Probability and Statistics} \textbf{16} (2012), 222--276.

\bibitem{five}
S. Jayasumana, R. Hartley, M. Salzmann, H. Li, and M. Harandi, Kernel methods on Riemannian manifolds with Gaussian RBF kernels. \textit{IEEE Transactions on Pattern Analysis and Machine Intelligence}, \textbf{37} (2015), 2464--2477.


\bibitem{kato}
T. Kato, \textit{A short introduction to perturbation theory for linear operators}, Springer-Verlag, 1982.

\bibitem{k}
W. Klingenberg, Lectures on closed geodesics. Grundlehren der Mathematischen Wissenschaften, Vol. 230. \textit{Springer-Verlag, Berlin-New York}, 1978. x+227 pp. ISBN: 3-540-08393-6.




\bibitem{lz}
C.-K. Li and F. Zhang, Eigenvalue continuity and Ger\u{s}gorin's theorem, \textit{Electron. J. Linear Algebra} \textbf{35} (2019), 619--625.



\bibitem{lf}
L.~A. Lyusternik and A.~I. Fet, Variational problems on closed manifolds. \textit{Dokl. Akad. Nauk SSSR (N.S.)} \textbf{81} (1951), 17--18 [in Russian]. 






\bibitem{n1}
J. Nash, {$C^1$ isometric imbeddings}, \textit{Ann. of Math.} \textbf{60} (1954), 383--396.


\bibitem{n2}
J. Nash,
{The imbedding problem for Riemannian manifolds},
\textit{Ann. of Math.} \textbf{63} (1956), 20--63.

\bibitem{scfly}
C. Salvi, T. Cass, J. Foster, T. Lyons, and W. Yang, The Signature Kernel is the solution of a Goursat PDE, \textit{SIAM J. Math. Data Sci.} \textbf{3} (2021), 873--899.


\bibitem{s}
I.~J. Schoenberg, Metric spaces and positive definite functions. \textit{Trans. Amer. Math. Soc.} \textbf{44} (1938), 522--536.

\bibitem{ss}
B. Sch\"{o}lkopf and A. Smola, \textit{Learning with kernels: support
vector machines, regularization, optimization, and beyond}. Adaptive computation and machine learning. MIT
Press, 2002.


\bibitem{ssm}
B. Sch\"{o}lkopf, A. Smola, and K.~R, M\"{u}ller, Nonlinear component analysis as a kernel eigenvalue problem, \textit{Neural Computation} \textbf{10} (1998), 1299--1319.

\bibitem{sra}
S. Sra, Positive definite matrices and the S-divergence, \textit{Proc. Amer. Math. Soc.} \textbf{144} (2016), 2787--2797.

\bibitem{sy}
J.~L. Synge, On the connectivity of spaces of positive curvature, \textit{Quart. J. Math. (Oxford Ser.)} \textbf{7} (1936), 316--320.


\bibitem{w}
A.~T.~A. Wood, When is a truncated covariance function on the line a covariance function on the circle? \textit{Statist. Probab. Lett.} \textbf{24} (1995), 157--164.

 
\end{thebibliography}
\end{document}